\newtheorem{lemma}{Lemma}
\newtheorem{proposition}{Proposition}
\newtheorem{corollary}{Corollary}
\newtheorem{assumption}{Assumption}
\newdefinition{definition}{Definition}
\newtheorem{example}{Example}
\newdefinition{remark}{Remark}
\newcommand{\Ham}{\mathcal{H}}
\newcommand{\opt}[1]{\hat{#1}}
\newcommand{\dd}{{\rm\,d}}
\newcommand{\eps}{\varepsilon}
\begin{document}

\title{On a sufficient condition for infinite horizon\\ optimal control problems}

\author[msu,mephi,kimm]{Anton O. Belyakov}


\address[msu]{Moscow School of Economics, Lomonosov Moscow State University, Moscow, Russia}
\address[mephi]{National Research Nuclear University ``MEPhI'', Moscow, Russia}
\address[kimm]{Central Economic Mathematical Institute of the Russian Academy of Sciences}

\begin{abstract}
New form of sufficient optimality condition is obtained in comparison with the Mangasarian sufficiency theorem. Both finite and infinite values of objective functional are allowed since concepts of overtaking and weakly overtaking optimality are implied. Examples, where new conditions can be applied are presented. The conditions are shown to be both necessary and sufficient, when Hamiltonian is linear with respect to state and control.
\end{abstract}


\maketitle	

\section{Introduction}\label{sec:intro}
Optimal control problems with infinite horizon play important role in economic theory. For instance, the problem of optimal consumption/investment by a household/firm living infinite time is considered in major advanced textbooks on the theory of economic growth, see, e.g. \cite{Acemoglu2009}. 

There are well known the Mangasarian sufficiency theorem, assuming Hamiltonian concavity w.r.t. state and control variables, and the Arrow sufficiency theorem, assuming that control maximized Hamiltonian is concave w.r.t. state variable, see, e.g. \cite{Seierstad1986}. Both theorems prove that control is
overtaking optimal\footnote{Catching up criterion in \cite{Seierstad1986}.} if
\begin{equation}\label{eq:SCoo}
  \liminf_{t\rightarrow\infty} \langle\psi(t),\opt{x}(t)-x(t)\rangle \geq 0,
\end{equation}
and weakly overtaking optimal\footnote{Sporadically catching up criterion in \cite{Seierstad1986}.} if
\begin{equation}\label{eq:SCwoo}
  \limsup_{t\rightarrow\infty} \langle\psi(t),\opt{x}(t)-x(t)\rangle \geq 0,
\end{equation}
for all admissible \emph{state} trajectories $x(\cdot)$, where $\opt{x}$ is the \emph{optimal state variable}, $\psi$ is the corresponding to $\opt{x}$ \emph{adjoint variable}, and brackets $\langle \cdot,\cdot\rangle$ denote scalar product
of two vectors.
These conditions may be implied by usual transversality conditions
\begin{equation}\label{eq:tcPSI}
  \lim_{t\rightarrow\infty} \psi(t) = 0,\qquad 
  \lim_{t\rightarrow\infty} \langle\opt{x}(t),\psi(t)\rangle = 0,
\end{equation}
 under particular assumptions, see \cite{Michel2003}. Similarly one can check that the state and control belong to particular weighted spaces, see \cite{PickenhainLykina2006}.  
 
In contrast to the more general Arrow theorem the proof of the Mangasarian sufficiency theorem could be written without the maximum condition of the Hamiltonian w.r.t. control variable. This allows us to derive new form of sufficient conditions instead of (\ref{eq:SCoo})--(\ref{eq:SCwoo}) under similar concavity condition and almost in the same way as the Mangasarian theorem.
   
In this paper another form of sufficient conditions, is obtained with the use of a Cauchy-type formula with variable upper limit integral  same as in \cite{BelyakovCONDITION,Aseev2017}. This formula solves the adjoint equation, but in my proof it does not have to converge to the adjoint variable of the maximum principle, as time horizon tends to infinity.  

For problems linear in control and state variables the obtained conditions are both necessary and sufficient.

\section{Statement of the problem}
\label{sec:statement}

Let $X$ be a nonempty open
convex subset of $R^n$, $U$ be an arbitrary nonempty set in
$R^m$. Let us consider the following optimal
control problem:
\begin{align}
  & \int_{t_0}^{\infty}g(x(t),u(t),t)\dd t  \label{eq:J}
\rightarrow\max_{u},\\
  & \dot x(t) = f(x(t),u(t),t),\quad x(t_0) = x_0, \label{eq:dx}
\end{align}
where $u(t)\in U$ and exists state variable $x(t)\in X$ for all $t \in [t_0,+\infty)$.  We call such control $u(\cdot)$ and state variable $x(\cdot)$ trajectories \emph{admissible}. 
Functions $f$ and $g$ are differentiable w.r.t. $(x, u)$, and together with these partial derivatives are defined and
locally bounded, measurable in $t$ for every $(x, u)\in X \times U$, and continuous in $(x, u)$
for almost every $t \in [0,\infty)$. 

Improper integral in (\ref{eq:J}) might not converge for any candidate for optimal control $\opt{u}(\cdot)$, i.e. the limit
\begin{equation}\label{eq:Jinfty}
    \lim_{T\rightarrow\infty} J(\opt{u}(\cdot),x_0,t_0,T),
\end{equation}
might fail to exist, or might be infinite, where we introduce the finite time horizon functional:
\begin{equation}\label{eq:JT}
    J(u(\cdot),x_0,t_0,T) = \int_{t_0}^{T}g(x(t),u(t),t)\dd t,
\end{equation}
subject to state equation (\ref{eq:dx}). Thus functional $J$  may be unbounded or oscillating as $T\rightarrow\infty$. So we consider more general   definitions of optimality.
\begin{definition}\label{def:LOO} An admissible control $\opt{u}(\cdot)$ is 
\emph{overtaking optimal} (OO) if for every admissible control $u(\cdot)$ and every scalar $\eps > 0$ there exists time $T=T(\eps,u(\cdot)) > t_0$ 
such that for all $T' \geq T$ holds
$
  J(u(\cdot),x_0,t_0,T') - J(\opt{u}(\cdot),x_0,t_0,T') \leq \eps
$.
\end{definition}
\begin{definition}\label{def:LWOO} An admissible control $\opt{u}(\cdot)$ is weakly
overtaking optimal (WOO) if for every admissible control $u(\cdot)$, scalar $\eps > 0$, and time $T > t_0$ 
one can find $T' = T'(\eps,T,u(\cdot)) \geq T$ such that 
$
  J(u(\cdot),x_0,t_0,T') - J(\opt{u}(\cdot),x_0,t_0,T') \leq \eps
$.
\end{definition}
These two definitions imply that  for all admissible controls $u(\cdot)$ 
$$\limsup_{T\rightarrow\infty} \left(J(u(\cdot),x_0,t_0,T) - J(\opt{u}(\cdot),x_0,t_0,T)\right) \leq 0,$$
for OO $\opt{u}(\cdot)$ in Definition~\ref{def:LOO}  and
$$\liminf_{T\rightarrow\infty} \left(J(u(\cdot),x_0,t_0,T) - J(\opt{u}(\cdot),x_0,t_0,T)\right) \leq 0,$$
for WOO $\opt{u}(\cdot)$ in Definition~\ref{def:LWOO}. It is clear that if $\opt{u}(\cdot)$ is OO, then it is also WOO. When ordinary optimality holds, i.e. finite limit exists  in (\ref{eq:Jinfty}) and for all admissible controls $u(\cdot)$ 
$$\limsup_{T\rightarrow\infty} J(u(\cdot),x_0,t_0,T)  \leq \lim_{T\rightarrow\infty} J(\opt{u}(\cdot),x_0,t_0,T),$$
then $\opt{u}(\cdot)$ is also both OO and WOO.\footnote{
There are many definitions of optimality in the literature, for example, corresponding uniform optimalities, when $T$ and $T'$ do not depend on $u(\cdot)$ in Definitions \ref{def:LOO} and \ref{def:LWOO}, see e.g., \cite{Khlopin2013}. These definitions are stronger and may lead to absence of corresponding optimal solutions.} Moreover, when  limit in (\ref{eq:Jinfty}) is finite, all WWO controls are ordinary optimal.

\section{Optimality conditions}
\label{sec:conditions}

With the use of Lagrange multipliers, scalar $\lambda$ and vector $\psi$, we introduce \emph{Hamilton-Pontryagin} function
\begin{equation}\label{eq:H}
    \Ham(x,u,t,\psi,\lambda) = \lambda\, g(x,u,t) + \langle \psi, f(x,u,t)\rangle
\end{equation}
where brackets $\langle \cdot,\cdot\rangle$ denote scalar product
of two vectors. 
We consider the following Cauchy-type formula 
\begin{equation}\label{eq:Jx}
\opt{J}_x(t,T) :=  Y(t)^{-1}\int_{t}^{T}Y(s)\,\frac{\partial g}{\partial
    x}(\opt{x}(s),\opt{u}(s),s) \dd s 
\end{equation}
as a particular solution of the adjoint equation:
\begin{equation}\label{eq:adjH}
    -\dot\psi(t) = \frac{\partial\Ham}{\partial
    x}(\opt{x}(t),\opt{u}(t),t,\psi(t),\lambda).
\end{equation}
with $Y(\tau)$ being the fundamental matrix of the linear system
\begin{equation}
  \dot y(t) = \left(\frac{\partial f}{\partial
    x}(\opt{x}(t),\opt{u}(t),t)\right)^{\prime}\,y(t),\label{eq:dy}
\end{equation}
where $\left(\frac{\partial f}{\partial
    x}(\opt{x}(t),\opt{u}(t),t)\right)^{\prime}$ is the transposed Jacobian matrix $\frac{\partial f}{\partial
    x}(\opt{x}(t),\opt{u}(t),t)$.

\begin{assumption}\label{a:lim0}
For almost all time instances $t \geq
t_0$ there exists $T_1 = T_1(t) > t_0$ such that
$\Ham(x,u,t,\opt{J}_x(t,T),1)$ is concave in $(x,u)$ for each $T \geq T_1$.
where we take $\opt{J}_x(\tau,T)$ as defined (\ref{eq:Jx}).
\end{assumption}
This assumption differs from Mangasarian concavity assumption because $\opt{J}_x(t,T)$ does not have to be equal to the adjoint variable for which holds the maximum principle, see Example~\ref{ex:Ramsey}. Moreover we do not require in the following lemma the existence of the limit $$\displaystyle\lim_{T\to\infty} \opt{J}_x(t,T),$$  so $\opt{J}_x(t,T)$ can be unbounded (Example~\ref{ex:unbnd}) or oscillating in $T$  (Example~\ref{ex:oscil}).

\begin{lemma}[Sufficient optimality conditions] 
\label{p:limS}
Let Assumption \ref{a:lim0} be fulfilled, then admissible pair $(\hat{u}(\cdot),\hat{x}(\cdot))$.\\
1) is OO if for each admissible control $u(\cdot)$ holds
\begin{align}\label{eq:limUoo}
&\liminf_{T\rightarrow \infty}\int\limits_{t_0}^{T}\!\left\langle\frac{\partial \Ham}{\partial u}(\opt x(t), \opt u(t),t,\opt{J}_x(t,T),1), \opt u(t) - u(t)\right\rangle\!\dd t \geq 0,
\end{align}
2) is WOO if for each admissible control $u(\cdot)$ holds
\begin{align}\label{eq:limUwoo}
&\limsup_{T\rightarrow \infty}\int\limits_{t_0}^{T}\!\left\langle\frac{\partial \Ham}{\partial u}(\opt x(t), \opt u(t),t,\opt{J}_x(t,T),1), \opt u(t) - u(t)\right\rangle\!\dd t \geq 0.
\end{align}
\end{lemma}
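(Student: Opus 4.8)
The plan is to carry out the classical Mangasarian computation at each finite horizon $T$, using the variable-upper-limit costate $\psi_T(t):=\opt{J}_x(t,T)$ in place of a fixed adjoint variable. Two structural properties of $\psi_T$ drive everything. By its construction in (\ref{eq:Jx}) it solves the adjoint equation (\ref{eq:adjH}) with $\lambda=1$ along $(\opt x,\opt u)$, so that $-\dot\psi_T(t)=\frac{\partial\Ham}{\partial x}(\opt x(t),\opt u(t),t,\psi_T(t),1)$ for almost every $t$; and since the integral in (\ref{eq:Jx}) runs from $t$ to $T$, it satisfies the terminal condition $\psi_T(T)=0$. These are exactly the two ingredients that, together with the shared initial condition $\opt x(t_0)=x(t_0)=x_0$, will make all state-dependent terms cancel.

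Fix an admissible $u(\cdot)$ and a horizon $T$, and write $w:=\opt x-x$, so that $w(t_0)=0$. For almost every $t$ with $T\ge T_1(t)$, Assumption~\ref{a:lim0} makes $(x,u)\mapsto\Ham(x,u,t,\psi_T(t),1)$ concave, whence the gradient inequality
\[
\Ham(\opt x,\opt u,t,\psi_T,1)-\Ham(x,u,t,\psi_T,1)\ge \left\langle\tfrac{\partial\Ham}{\partial x}(\star),w\right\rangle+\left\langle\tfrac{\partial\Ham}{\partial u}(\star),\opt u-u\right\rangle,
\]
where $\star$ abbreviates the argument list $(\opt x(t),\opt u(t),t,\psi_T(t),1)$. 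Substituting $\Ham=g+\langle\psi_T,f\rangle$ and using $f(\opt x,\opt u,t)=\dot{\opt x}$, $f(x,u,t)=\dot x$, the left-hand side becomes $g(\opt x,\opt u,t)-g(x,u,t)+\langle\psi_T,\dot w\rangle$. Integrating over $[t_0,T]$, integrating $\int\langle\psi_T,\dot w\rangle\dd t$ by parts, and inserting both the adjoint equation $\frac{\partial\Ham}{\partial x}(\star)=-\dot\psi_T$ and the boundary values $w(t_0)=0$, $\psi_T(T)=0$, the two integrals containing $w$ cancel identically. What survives is precisely
\[
J(\opt u(\cdot),x_0,t_0,T)-J(u(\cdot),x_0,t_0,T)\ \ge\ \int_{t_0}^{T}\!\left\langle\tfrac{\partial\Ham}{\partial u}(\opt x,\opt u,t,\opt{J}_x(t,T),1),\,\opt u-u\right\rangle\dd t=:\Delta(T).
\]

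The conclusion is then a limit computation. From the displayed inequality, $J(u(\cdot),x_0,t_0,T)-J(\opt u(\cdot),x_0,t_0,T)\le-\Delta(T)$ for every $T$. For part~1, the hypothesis (\ref{eq:limUoo}), i.e. $\liminf_{T\to\infty}\Delta(T)\ge0$, yields $\limsup_{T\to\infty}[J(u(\cdot),x_0,t_0,T)-J(\opt u(\cdot),x_0,t_0,T)]\le-\liminf_{T\to\infty}\Delta(T)\le0$, which is exactly the $\limsup$ characterization of OO recalled after Definition~\ref{def:LOO}. For part~2, the hypothesis (\ref{eq:limUwoo}), i.e. $\limsup_{T\to\infty}\Delta(T)\ge0$, yields $\liminf_{T\to\infty}[J(u(\cdot),x_0,t_0,T)-J(\opt u(\cdot),x_0,t_0,T)]\le-\limsup_{T\to\infty}\Delta(T)\le0$, the $\liminf$ characterization of WOO.

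The delicate point, and the genuine departure from the textbook Mangasarian proof, is that both the costate $\psi_T$ and the set of times on which concavity is available depend on $T$. Assumption~\ref{a:lim0} only furnishes concavity of $\Ham(\cdot,\cdot,t,\opt{J}_x(t,T),1)$ for $T\ge T_1(t)$, so the pointwise gradient inequality underpinning the integral bound is justified only at those $t\in[t_0,T]$ with $T_1(t)\le T$; controlling the remaining times as $T\to\infty$ is where the argument must be made careful. One must also verify the routine but non-vacuous regularity: that $\psi_T$ and $w$ are absolutely continuous so the integration by parts is legitimate, and that the gradient inequality holds for almost every $t$ under the stated local boundedness and measurability of $g$, $f$, and their derivatives. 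Because $\Delta(T)$ already carries the full $T$-dependence of $\opt{J}_x(t,T)$ inside the integrand, no convergence of $\opt{J}_x(t,T)$ as $T\to\infty$ is required, which is precisely what lets the conditions accommodate the unbounded and oscillating cases flagged before the lemma.
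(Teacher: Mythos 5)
Your proof is correct and follows essentially the same route as the paper's own: the Mangasarian gradient-inequality computation with the $T$-dependent costate $\opt{J}_x(\cdot,T)$, cancellation of the state terms via the adjoint equation and integration by parts using $x(t_0)=\opt x(t_0)$ and $\opt{J}_x(T,T)=0$, followed by the limsup/liminf reformulation of Definitions~\ref{def:LOO} and \ref{def:LWOO}. If anything, you are slightly more careful than the paper, which leaves the final limit step implicit and applies the concavity of Assumption~\ref{a:lim0} at every $t\in[t_0,T]$ even though it is only guaranteed when $T\ge T_1(t)$ --- the very subtlety you flag as delicate but which the paper's proof does not address either.
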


\begin{proof} Let us consider any admissible pair $(u(\cdot),x(\cdot))$, i.e. $ u(t)\in U$ and corresponding trajectory $x(t)\in X$ for all $t>t_0$. The corresponding increment in the value of the functional can be written as follows:
\begin{align}
  \Delta J(T) & := J(\opt{u}(\cdot),x_0,t_0,T) - J(u(\cdot),x_0,t_0,T)\nonumber\\
 &=\int\limits_{t_0}^{T}\!\!\left(g(\opt x(t), \opt u(t),t) - g(x(t),u(t),t)\right)\!\dd t  \nonumber\\
&=\int\limits_{t_0}^{T}\!\!\left(\Ham(\opt x(t), \opt u(t),t,\opt{J}_x(t,T),1) - \Ham(x(t),u(t),t,\opt{J}_x(t,T),1)\right)\!\dd t \nonumber\\
& + \int\limits_{t_0}^{T}\!\langle \opt{J}_x(t,T),  f(x(t),u(t),t) - f(\opt x(t), \opt u(t),t)\rangle\dd t.\label{eq:DJT}
\end{align}
Due to concavity Assumption~\ref{a:lim0} we have the following inequality
\begin{align}
& \Ham(x(t), u(t),t,\opt{J}_x(t,T),1) - \Ham(\opt x(t), \opt u(t),t,\opt{J}_x(t,T),1)  \nonumber\\
& \leq \left\langle\frac{\partial \Ham}{\partial u}(\opt x(t), \opt u(t),t,\opt{J}_x(t,T),1), u(t) - \opt u(t)\right\rangle \nonumber\\ 
& + \left\langle\frac{\partial \Ham}{\partial x}(\opt x(t), \opt u(t),t,\opt{J}_x(t,T),1), x(t) - \opt x(t)\right\rangle,\label{ineq:conc}
\end{align}
where $\frac{\partial \Ham}{\partial x}(\opt x(t), \opt u(t),t,\opt{J}_x(t,T),1) = -  \frac{\dd \opt{J}_x}{\dd t}(t,T)$ by definition of $\opt{J}_x(t,T)$ in (\ref{eq:Jx}).
\begin{align*}
  \Delta J(T) & \geq \int\limits_{t_0}^{T}\!\!\left(\left\langle \frac{\dd \opt{J}_x}{\dd t}(t,T),  x(t) - \opt{x}(t) \right\rangle + \left\langle \opt{J}_x(t,T),  \frac{\dd x(t)}{\dd t} - \frac{\dd \opt{x}(t)}{\dd t} \right\rangle\right)\!\dd t  \nonumber\\
& + \int\limits_{t_0}^{T}\!\left\langle\frac{\partial \Ham}{\partial u}(\opt x(t), \opt u(t),t,\opt{J}_x(t,T),1), \opt u(t) - u(t)\right\rangle\!\dd t.
\end{align*}
The first integral is zero 
\begin{align}
& \int\limits_{t_0}^{T}\!\!\left(\left\langle \frac{\dd \opt{J}_x}{\dd t}(t,T),  x(t) - \opt{x}(t) \right\rangle + \left\langle \opt{J}_x(t,T),  \frac{\dd x(t)}{\dd t} - \frac{\dd \opt{x}(t)}{\dd t} \right\rangle\right)\!\dd t  \nonumber\\
& = \int\limits_{t_0}^{T}\!\frac{\dd}{\dd t} \left\langle \opt{J}_x(t,T),  x(t) - \opt{x}(t) \right\rangle\!\dd t  = \left\langle \opt{J}_x(t,T),  x(t) - \opt{x}(t) \right\rangle \Big|_{t_0}^{T} = 0, \label{eq:0}
\end{align}
since $x(t_0) = \opt{x}(t_0)$ and $\opt{J}_x(T,T) = 0$, see definition in (\ref{eq:Jx}).
 \end{proof}
 
Notice that if in Assumption~\ref{a:lim0} we require linearity w.r.t. $(x,u)$ instead of concavity, then inequality (\ref{ineq:conc}) becomes equality and sufficient conditions also become necessary.
\begin{corollary}[Nesessary and sufficient optimality conditions]\label{c:limNS}
Let $\Ham(x,u,t,\psi,1)$ be linear in $(x,u)$ for each $\psi$ and $t$. 
Then admissible pair $(\hat{u}(\cdot),\hat{x}(\cdot))$\\
1) is OO iff for each admissible control $u(\cdot)$ holds inequality (\ref{eq:limUoo}),\\
2) is WOO iff for each admissible control $u(\cdot)$ holds  inequality (\ref{eq:limUwoo}).
\end{corollary}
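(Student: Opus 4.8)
The plan is to observe that the corollary follows from a sharpened version of the computation already carried out in the proof of Lemma~\ref{p:limS}, in which the single inequality used there becomes an identity. Writing
\[
I(T) := \int_{t_0}^{T}\left\langle\frac{\partial \Ham}{\partial u}(\opt x(t), \opt u(t),t,\opt{J}_x(t,T),1), \opt u(t) - u(t)\right\rangle\dd t,
\]
conditions (\ref{eq:limUoo}) and (\ref{eq:limUwoo}) read $\liminf_{T\to\infty} I(T) \geq 0$ and $\limsup_{T\to\infty} I(T) \geq 0$ respectively. The whole argument then reduces to relating $\Delta J(T)$ to $I(T)$ and relating the limit inferior/superior of $\Delta J(T)$ to the overtaking-type definitions.

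For the sufficiency (``if'') direction I would simply note that a function linear in $(x,u)$ is in particular concave in $(x,u)$, so Assumption~\ref{a:lim0} holds automatically for every $T$; hence Lemma~\ref{p:limS} applies verbatim and yields OO under (\ref{eq:limUoo}) and WOO under (\ref{eq:limUwoo}).

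For the necessity (``only if'') direction the key step is that, when $\Ham(x,u,t,\psi,1)$ is linear in $(x,u)$, the tangent inequality (\ref{ineq:conc}) holds with equality, since a linear function coincides with its first-order expansion. Re-running the proof of Lemma~\ref{p:limS} with equality in place of the inequality, and using that the first integral vanishes identically by (\ref{eq:0}), I obtain the exact identity $\Delta J(T) = I(T)$ for every admissible $u(\cdot)$ and every $T > t_0$. It then remains to translate the optimality definitions: by Definition~\ref{def:LOO}, OO means that for each $u(\cdot)$ and each $\eps>0$ one eventually has $-\Delta J(T') \leq \eps$, i.e. $\Delta J(T') \geq -\eps$ for all large $T'$, which is precisely $\liminf_{T\to\infty}\Delta J(T) \geq 0$; likewise, by Definition~\ref{def:LWOO}, WOO is exactly $\limsup_{T\to\infty}\Delta J(T) \geq 0$. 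Substituting $\Delta J(T) = I(T)$ converts these into $\liminf_{T\to\infty} I(T)\geq 0$ and $\limsup_{T\to\infty} I(T)\geq 0$, that is, (\ref{eq:limUoo}) and (\ref{eq:limUwoo}).

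The only place where some care is needed is the equivalence between the $\eps$--$T$ formulations of OO/WOO and the corresponding $\liminf$/$\limsup$ conditions on $\Delta J(T)$. The paper already records the forward implication after Definitions~\ref{def:LOO} and~\ref{def:LWOO}; for necessity I need the converse as well, which is the standard characterization that $\liminf_{T\to\infty}\Delta J(T)\geq 0$ holds iff for every $\eps>0$ one has $\Delta J(T)\geq-\eps$ for all large $T$, and that $\limsup_{T\to\infty}\Delta J(T)\geq 0$ holds iff for every $\eps>0$ and every threshold there is a later $T$ with $\Delta J(T)\geq-\eps$. I would also stress that the pointwise equality in (\ref{ineq:conc}) uses joint linearity in $(x,u)$, exactly the hypothesis of the corollary, so no extra regularity beyond what the statement assumes enters the argument.
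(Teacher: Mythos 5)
Your proof is correct and takes essentially the same route as the paper, whose entire argument for this corollary is the remark immediately preceding it: linearity in $(x,u)$ turns inequality (\ref{ineq:conc}) into an equality, so $\Delta J(T)$ coincides exactly with the integral appearing in (\ref{eq:limUoo})--(\ref{eq:limUwoo}), while sufficiency comes from Lemma~\ref{p:limS} since linearity implies concavity. You merely make explicit the details the paper leaves implicit, namely the identity $\Delta J(T)=I(T)$ and the equivalence between the $\eps$--$T$ formulations of OO/WOO and the $\liminf$/$\limsup$ conditions on $\Delta J(T)$.
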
%
 
The following example applies these optimality conditions, when they are both necessary and sufficient due to linearity of the problem and when $\opt{J}_x(\tau,T)$ oscillates in $T$ having no limit as $T\to\infty$. 

\begin{example}[{\cite[Example 1.2]{Carlson1991infinite}} extended to $b > 0$]\label{ex:oscil}
Let us maximize the following integral
$$\max_u \int_0^{\infty} \left(x_2(t) + b u(t)\right)\dd t, \quad \text{ s.t.:  } \left\{\begin{array}{ll@{\quad}l}
	\dot x_1(t) & = x_2(t),  & x_1(0) = 0,\\
	\dot x_2(t) & = u(t)  - x_1(t), & x_2(0) = 0,\\
\end{array}\right.$$
where $b \geq 0$, subject to the system describing a linear oscillator
with bounded control $u(t)\in[-1, 1]$ for all $t\geq 0$.
 
We have the \emph{state-transition matrix} of the linearized system
$$Y^{\prime}(s)\, Y^{\prime}(t)^{-1} = \exp\left(\left(\begin{array}{cc}
	0 & 1\\
	-1 & 0\\
\end{array}\right)(s-t)\right) = \left( 
\begin{array}{cc}
	\cos(s-t) & \sin(s-t)\\
	-\sin(s-t) & \cos(s-t)\\
\end{array}\right)
$$
and vector-function $\opt{J}_x$, defined in (\ref{eq:Jx}), oscillating in $T$:
\begin{align*}
\opt{J}_x(t,T) & =  \int_{t}^{T}Y(t)^{-1} Y(s) \left(\begin{array}{c}
	0\\
	1\\
\end{array}\right) \dd s \\
    & = \int\limits_{t}^{T} \left(\begin{array}{cc}
	\cos(s-t) & -\sin(s-t)\\
	\sin(s-t) & \cos(s-t)\\
\end{array}\right) \left(\begin{array}{c}
	0\\
	1\\
\end{array}\right) \!\dd s =  \left(\begin{array}{c}
	\cos(T-t)-1\\
	\sin(T-t)\\
\end{array}\right).
\end{align*}
Hamilton-Pontryagin function takes the form
$$ 
\Ham(x,u,t,\psi,\lambda) = \lambda \left(x_2 + b u\right) + \psi_1 x_2 + \psi_2 \left(u - x_1\right),
$$ 
and its derivative w.r.t. $u$ at point $(x,u,t,\psi,\lambda) = (\opt x(t), \opt u(t),t,\opt{J}_x(t,T),1)$
$$
\frac{\partial \Ham}{\partial u}(\opt x(t), \opt u(t),t,\opt{J}_x(t,T),1) = b + \sin(T-t).
$$
Control $\opt u(\cdot)$ is OO iff for each admissible control $u(\cdot)$ holds (\ref{eq:limUoo})
$$ 
\liminf_{T\rightarrow \infty}\int\limits_{0}^{T}\!\left(b + \sin(T-t)\right)\left(\opt u(t) - u(t)\right)\!\dd t \geq 0.
$$ 
Control $\opt u(\cdot)$ is WOO iff for each admissible control $u(\cdot)$ holds (\ref{eq:limUwoo})
$$ 
\limsup_{T\rightarrow \infty}\int\limits_{0}^{T}\!\left(b + \sin(T-t)\right)\left(\opt u(t) - u(t)\right)\!\dd t \geq 0.
$$ 
For instance, control $\opt{u} \equiv 1$ is OO when $b \geq 1$.
When $b\in [0,1)$ control $\opt{u} \equiv 1$ is WOO.
\end{example}

The next example applies Corollary~\ref{c:limNS} to a linear problem, where  $\opt{J}_x(\tau,T)$ is unbounded as $T\to\infty$.

\begin{example}\label{ex:unbnd}
Let us maximize the following integral
$$\max_u \int_0^{\infty} x(t) \dd t, \quad \text{ s.t.:  } \dot x(t)  = u(t),\quad x(0) = 0,$$
where control $u(t) \leq 1$ for all $t\geq 0$.
 
We have the \emph{state-transition matrix} of the linearized system
$$Y^{\prime}(s)\, Y^{\prime}(t)^{-1} = 1$$
and vector-function $\opt{J}_x$, defined in (\ref{eq:Jx}) is unbounded in $T$:
\begin{align*}
\opt{J}_x(t,T) & =  \int_{t}^{T} \dd s = T-t.
\end{align*}
Hamilton-Pontryagin function takes the form
$ 
\Ham(x,u,t,\psi,\lambda) = \lambda \, x + \psi\, u
$ 
and its derivative w.r.t. $u$ at point $(x,u,t,\psi,\lambda) = (\opt x(t), \opt u(t),t,\opt{J}_x(t,T),1)$
$$
\frac{\partial \Ham}{\partial u}(\opt x(t), \opt u(t),t,\opt{J}_x(t,T),1) = T-t.
$$
Then control $\opt{u} \equiv 1$ is OO since for each admissible control $u(\cdot)$ holds (\ref{eq:limUoo})
$$ 
\liminf_{T\rightarrow \infty}\int\limits_{0}^{T}\!\left(T-t\right)\left(1 - u(t)\right)\!\dd t \geq 0.
$$ 
\end{example}

\begin{proposition}[Sufficient optimality conditions] 
\label{p:limS1}
Let $\Ham(x,u,t,\psi,1)$ be concave in $(x,u)$ for each $\psi$ and $t$, then admissible pair $(\hat{u}(\cdot),\hat{x}(\cdot))$.\\
1) is OO if for each admissible control $u(\cdot)$ holds
\begin{align}\label{eq:limUoo2}
&\liminf_{T\rightarrow \infty}\int\limits_{t_0}^{T}\!\left\langle\frac{\partial f}{\partial u}(\opt x(t), \opt u(t),t)\cdot \left(\opt{J}_x(t,T)-\psi(t)\right), \opt u(t) - u(t)\right\rangle\!\dd t \geq 0,
\end{align}
2) is WOO if for each admissible control $u(\cdot)$ holds
\begin{align}\label{eq:limUwoo2}
&\limsup_{T\rightarrow \infty}\int\limits_{t_0}^{T}\!\left\langle\frac{\partial f}{\partial u}(\opt x(t), \opt u(t),t)\cdot \left(\opt{J}_x(t,T)-\psi(t)\right), \opt u(t) - u(t)\right\rangle\!\dd t \geq 0,
\end{align}
where $\psi(t)$ is the adjoint variable for which maximum condition holds.
\end{proposition}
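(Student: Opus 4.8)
The plan is to deduce Proposition~\ref{p:limS1} from Lemma~\ref{p:limS} by a pointwise comparison of the two integrands. The concavity hypothesis of the proposition (concavity of $\Ham(x,u,t,\psi,1)$ in $(x,u)$ for every $\psi$ and $t$) in particular holds for $\psi=\opt{J}_x(t,T)$, so Assumption~\ref{a:lim0} is satisfied and Lemma~\ref{p:limS} is applicable. The starting observation is that $\Ham$ is affine in the adjoint argument, so that $\frac{\partial \Ham}{\partial u}(\opt x(t),\opt u(t),t,\psi,1)=\frac{\partial g}{\partial u}(\opt x(t),\opt u(t),t)+\frac{\partial f}{\partial u}(\opt x(t),\opt u(t),t)\cdot\psi$ depends linearly on $\psi$. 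Evaluating this once at $\psi=\opt{J}_x(t,T)$ and once at the Pontryagin adjoint $\psi=\psi(t)$ and subtracting, the $g$-terms cancel and I obtain
\[
\frac{\partial \Ham}{\partial u}(\opt x(t),\opt u(t),t,\opt{J}_x(t,T),1)=\frac{\partial \Ham}{\partial u}(\opt x(t),\opt u(t),t,\psi(t),1)+\frac{\partial f}{\partial u}(\opt x(t),\opt u(t),t)\cdot\bigl(\opt{J}_x(t,T)-\psi(t)\bigr).
\]
Pairing with $\opt u(t)-u(t)$ shows that the integrand of Lemma~\ref{p:limS} equals the integrand of Proposition~\ref{p:limS1} plus the extra term $\langle\frac{\partial \Ham}{\partial u}(\opt x(t),\opt u(t),t,\psi(t),1),\opt u(t)-u(t)\rangle$.

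The second step is to show that this extra term is nonnegative for almost every $t$. This is exactly where the maximum condition for $\psi(t)$ enters: since $\opt u(t)$ maximizes $\Ham(\opt x(t),\cdot,t,\psi(t),1)$ over $U$, the first-order variational inequality $\langle\frac{\partial \Ham}{\partial u}(\opt x(t),\opt u(t),t,\psi(t),1),u-\opt u(t)\rangle\le 0$ holds for every $u\in U$, which is equivalent to $\langle\frac{\partial \Ham}{\partial u}(\opt x(t),\opt u(t),t,\psi(t),1),\opt u(t)-u(t)\rangle\ge 0$. Consequently, for every horizon $T$ and almost every $t\in[t_0,T]$, the integrand appearing in conditions (\ref{eq:limUoo})--(\ref{eq:limUwoo}) of Lemma~\ref{p:limS} dominates pointwise the integrand appearing in (\ref{eq:limUoo2})--(\ref{eq:limUwoo2}).

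The final step is to integrate this pointwise inequality over $[t_0,T]$ and pass to the limit. Writing $A(T)$ and $B(T)$ for the Lemma and the Proposition integrals respectively, I have $A(T)\ge B(T)$ for all $T$, whence $\liminf_{T\to\infty}A(T)\ge\liminf_{T\to\infty}B(T)$ and $\limsup_{T\to\infty}A(T)\ge\limsup_{T\to\infty}B(T)$. Thus hypothesis (\ref{eq:limUoo2}) forces the $\liminf$ in (\ref{eq:limUoo}) to be nonnegative and hypothesis (\ref{eq:limUwoo2}) forces the $\limsup$ in (\ref{eq:limUwoo}) to be nonnegative; applying part~1 and part~2 of Lemma~\ref{p:limS} yields OO and WOO respectively.

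I expect the only genuine obstacle to be the justification of the pointwise variational inequality from the maximum condition when $U$ is an arbitrary set: the inequality $\langle\frac{\partial \Ham}{\partial u}(\opt x(t),\opt u(t),t,\psi(t),1),u-\opt u(t)\rangle\le 0$ is immediate when $U$ is convex, by differentiating $\Ham$ along the feasible segment from $\opt u(t)$ to $u$, but for a general $U$ one must restrict to feasible directions or read this first-order inequality into the statement of the maximum condition. The remaining points — measurability and local integrability of $\frac{\partial f}{\partial u}(\opt x,\opt u,t)\cdot(\opt{J}_x(t,T)-\psi(t))$, and the legitimacy of carrying a pointwise bound through the improper-integral limit — are routine consequences of the admissibility and regularity hypotheses already imposed in Section~\ref{sec:statement}.
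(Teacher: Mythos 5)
Your proof is essentially the paper's own: both arguments decompose $\frac{\partial \Ham}{\partial u}(\opt x(t),\opt u(t),t,\opt{J}_x(t,T),1)$ using linearity of $\Ham$ in the adjoint argument, invoke the maximum condition to obtain the pointwise inequality $\left\langle\frac{\partial \Ham}{\partial u}(\opt x(t),\opt u(t),t,\psi(t),1),\opt u(t)-u(t)\right\rangle\geq 0$, and then feed the resulting domination of integrands into Lemma~\ref{p:limS}. The caveat you flag at the end is genuine --- deriving that variational inequality from the maximum condition requires convexity of $U$ (or a feasible-direction argument), as a concave function maximized over a nonconvex set need not satisfy it --- but the paper's proof makes exactly the same leap without comment, so this is a shared gap rather than a defect of your proposal.
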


\begin{proof} Notice that $\Ham(x,u,t,\psi,\lambda)$ is linear w.r.t. $\psi$ as well as its derivatives. 
Hence 
\begin{align*}
&\frac{\partial \Ham}{\partial u}(\opt x(t), \opt u(t),t,\opt{J}_x(t,T),1) \nonumber\\
& = \frac{\partial \Ham}{\partial u}(\opt x(t), \opt u(t),t,\psi(t),1) + \frac{\partial f}{\partial u}(\opt x(t), \opt u(t),t)\cdot \left(\opt{J}_x(t,T)-\psi(t)\right).
\end{align*}
Since $\Ham(\opt x(t), \opt u(t),t,\psi(t),1)$ is the maximum in $u$, we have
$$\left\langle\frac{\partial \Ham}{\partial u}(\opt x(t), \opt u(t),t,\psi(t),1), \opt u(t) - u(t)\right\rangle \geq 0.$$
In the result of Lemma~\ref{p:limS} we can use the inequality
\begin{align*}&\left\langle\frac{\partial \Ham}{\partial u}(\opt x(t), \opt u(t),t,\opt{J}_x(t,T),1), \opt u(t) - u(t)\right\rangle\\ 
& \geq \left\langle\frac{\partial f}{\partial u}(\opt x(t), \opt u(t),t)\cdot \left(\opt{J}_x(t,T)-\psi(t)\right), \opt u(t) - u(t)\right\rangle.
\end{align*}%
 \end{proof}
 
 \begin{example}[The $q$-Theory of Investment, see, e.g. \cite{Acemoglu2009}, 7.8]\label{ex:TobinQ}
Let us maximize the following integral
$$\max_u \int\limits_0^{\infty}\! e^{- r \,t} \!\left(f(x(t)) - u(t) - \frac{c}{2}u^2(t)\right)\!\dd t, \,\, \text{ s.t.:  } \dot x(t)  = u(t) - \delta\, x(t),\,\, x(0) = x_0 > 0,$$
where $u(t)\in[u_1,u_2]$ is the investment intensity and $x(t)$ is the amount of capital, $f$ is a concave production function, interest rate $r > 0$, depreciation $\delta > 0$, coefficient $c > 0$, $\alpha\in(0,1)$.

We have the state-transition function of the linearized system
$$Y^{\prime}(s)\, Y^{\prime}(t)^{-1} = e^{-\delta\left(s-t\right)}$$
and vector-function $\opt{J}_x$, defined in (\ref{eq:Jx}) can be written via marginal Tobin\rq{}s $q$:
\begin{align*}
\opt{J}_x(t,T) & =   e^{- r \,t}\int\limits_{t}^{T} e^{-\left(\delta+r\right)\left(s-t\right) } \frac{\dd f(\opt x(s))}{\dd x} \dd s =  e^{- r \,t}  q(t) - e^{- \delta\left(T-t\right)} e^{- r \,T} q(T),
\end{align*}
where  $ q(t) = \int_{t}^{\infty} e^{-\left(\delta+r\right)\left(s-t\right)} \frac{\dd f(\opt x(s))}{\dd x} \dd s$.\\
Hamilton-Pontryagin function is concave w.r.t. $(x,u)$
$$ 
\Ham(x,u,t,\psi,\lambda) = \lambda \left(f(x) - u - \frac{c}{2}u^2\right) + \psi\left(u - \delta\, x\right)
$$ 
and derivative $\frac{\partial f}{\partial u}(\opt x(t), \opt u(t),t) =   1$.
Admissible control $\opt{u}(\cdot)$ is OO if for each admissible control $u(\cdot)$ holds (\ref{eq:limUoo2})
\begin{equation}\label{eq:limR}
\liminf_{T\rightarrow \infty} e^{- r \,T} q(T)\int\limits_{0}^{T} e^{-\delta\left(T-t\right)} \left(u(t) - \opt u(t)\right)\!\dd t \geq 0.
\end{equation}
We assume that control leads to stationary point $x(t)\to x^{*}$ and $q(t)\to q^{*}$ as $t\to\infty$, then limit in (\ref{eq:limR}) is zero, due to bounded control.
\end{example}

\begin{example}[\cite{Ramsey1928}]\label{ex:Ramsey}
Let us maximize the following integral
$$\max_u \int_0^{\infty} \log u(t) \dd t, \quad \text{ s.t.:  } \dot x(t)  = x^{\alpha}(t) - u(t) - \delta\, x(t),\,\, x(0) = x_0 > 0,$$
where control $u(t) > 0$ and state $x(t) > 0$ for all $t\geq 0$. $\delta > 0$, $\alpha\in(0,1)$.

Vector-function $\opt{J}_x \equiv 0$ due to its definition in (\ref{eq:Jx}).
Hamilton-Pontryagin function is concave w.r.t. $(x,u)$
$$ 
\Ham(x,u,t,\psi,\lambda) = \lambda \log u + \psi\left(x^{\alpha} - u - \delta\, x\right)
$$ 
and derivative $\frac{\partial f}{\partial u}(\opt x(t), \opt u(t),t) =   -1$.
Admissible control $\opt{u}(\cdot)$ is OO if for each admissible control $u(\cdot)$ holds (\ref{eq:limUoo2})
$$ 
\liminf_{T\rightarrow \infty}\int\limits_{0}^{T}\!\psi(t)\left(\opt u(t) - u(t)\right)\!\dd t \geq 0, 
$$ 
where $\psi(t) = 1/\opt u(t)$. This yields condition (\ref{eq:limUoo})
$$
\liminf_{T\rightarrow \infty}\int\limits_{0}^{T}\!\left(1 - \frac{u(t)}{\opt u(t)}\right)\!\dd t \geq 0.
$$
\end{example}
This sufficient condition is yet to be studied. Necessary conditions for this problem are considered in \cite{Belyakov2019}.

\section{Non-concavity and non-differentiability with respect to control}
We can relax conditions of concavity and differentiability of Hamilton-Pontryagin function with respect to control variable. 
The following proposition could be useful if state variable in Hamilton-Pontryagin function is additive-separable from control variable.
\begin{proposition}[Sufficient optimality conditions] 
\label{p:limS}
Let for almost all time instances $t \geq t_0$ there exists $T_1 = T_1(t) > t_0$ such that
$\Ham(x,u,t,\opt{J}_x(t,T),1)$ is concave in $x$ for each $T \geq T_1$,
where we take $\opt{J}_x(\tau,T)$ as defined (\ref{eq:Jx}). 
Then admissible pair $(\hat{u}(\cdot),\hat{x}(\cdot))$\\
1) is OO if for each admissible pair $(x(\cdot),u(\cdot))$ holds
\begin{align}\label{eq:limUoo3}
&\liminf_{T\rightarrow \infty}\!\int\limits_{t_0}^{T}\!\!\left(\Ham(x(t), \opt u(t),t,\opt{J}_x(t,T),1) - \Ham(x(t), u(t),t,\opt{J}_x(t,T),1)\right)\!\!\dd t \geq 0,
\end{align}
2) is WOO if for each admissible control $u(\cdot)$ holds
\begin{align}\label{eq:limUwoo3}
&\limsup_{T\rightarrow \infty}\!\int\limits_{t_0}^{T}\!\!\left(\Ham(x(t), \opt u(t),t,\opt{J}_x(t,T),1) - \Ham(x(t), u(t),t,\opt{J}_x(t,T),1)\right)\!\!\dd t \geq 0.
\end{align}
\end{proposition}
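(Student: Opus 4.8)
The plan is to mimic the argument that produced (\ref{eq:DJT})--(\ref{eq:0}), but to split the Hamiltonian increment so that only the \emph{state} variation is linearised while the \emph{control} variation is kept intact as the finite difference appearing in the hypotheses. First I would form the increment $\Delta J(T) := J(\opt u(\cdot),x_0,t_0,T) - J(u(\cdot),x_0,t_0,T)$ and rewrite it exactly as in (\ref{eq:DJT}), substituting the running cost $g$ by $\Ham(\cdot,\opt{J}_x(t,T),1) - \langle\opt{J}_x(t,T),f\rangle$, so that $\Delta J(T)$ becomes the integral of the Hamiltonian difference $\Ham(\opt x,\opt u,t,\opt{J}_x(t,T),1) - \Ham(x,u,t,\opt{J}_x(t,T),1)$ plus the integral of $\langle\opt{J}_x(t,T),\, f(x(t),u(t),t) - f(\opt x(t),\opt u(t),t)\rangle$.

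Next, into that Hamiltonian difference I would insert the intermediate value $\Ham(x,\opt u,t,\opt{J}_x(t,T),1)$, in which only the state has been changed, writing the difference as the sum of a pure state term $\Ham(\opt x,\opt u,\dots) - \Ham(x,\opt u,\dots)$ and a pure control term $\Ham(x,\opt u,\dots) - \Ham(x,u,\dots)$. Concavity of $\Ham(\cdot,\opt u(t),t,\opt{J}_x(t,T),1)$ in $x$ (valid for $T\geq T_1$) gives the subgradient bound $\Ham(\opt x,\opt u,\dots) - \Ham(x,\opt u,\dots) \geq \langle \frac{\partial\Ham}{\partial x}(\opt x,\opt u,t,\opt{J}_x(t,T),1),\, \opt x - x\rangle$, and the defining relation $\frac{\partial\Ham}{\partial x}(\opt x,\opt u,t,\opt{J}_x(t,T),1) = -\frac{\dd\opt{J}_x}{\dd t}(t,T)$ from (\ref{eq:Jx}) turns this lower bound into $\langle \frac{\dd\opt{J}_x}{\dd t}(t,T),\, x - \opt x\rangle$. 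The residual control term is precisely the integrand of (\ref{eq:limUoo3})--(\ref{eq:limUwoo3}), so no differentiability or concavity in $u$ is ever invoked.

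Combining the linearised state term with the $f$-difference term and using $f=\dot x$ reproduces the total derivative $\frac{\dd}{\dd t}\langle\opt{J}_x(t,T),\, x(t)-\opt x(t)\rangle$, whose integral over $[t_0,T]$ vanishes by $x(t_0)=\opt x(t_0)$ and $\opt{J}_x(T,T)=0$, exactly as in (\ref{eq:0}). This leaves the clean lower bound $\Delta J(T) \geq \int_{t_0}^{T}\bigl(\Ham(x(t),\opt u(t),t,\opt{J}_x(t,T),1) - \Ham(x(t),u(t),t,\opt{J}_x(t,T),1)\bigr)\dd t$. Taking $\liminf_{T\to\infty}$ and applying (\ref{eq:limUoo3}) gives $\liminf_T \Delta J(T) \geq 0$, which is the OO property; taking $\limsup_{T\to\infty}$ and applying (\ref{eq:limUwoo3}) gives $\limsup_T \Delta J(T) \geq 0$, which is WOO.

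The main point to get right is the choice of intermediate point: the state must be varied at the \emph{optimal} control $\opt u(t)$, so that the gradient in the concavity inequality is evaluated at $(\opt x,\opt u)$ and equals $-\dd\opt{J}_x/\dd t$, enabling the same telescoping as in (\ref{eq:0}); the control is then varied at the \emph{perturbed} state $x(t)$, which is exactly what makes the surviving control term coincide with the integrand prescribed in the hypotheses. A minor but necessary point is the monotonicity of $\liminf$ and $\limsup$ under the pointwise inequality for $\Delta J(T)$, which is elementary yet must be stated to pass from the lower bound to the definitions of OO and WOO.
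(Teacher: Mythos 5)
Your proposal is correct and follows essentially the same route as the paper's own proof: the same decomposition of $\Delta J(T)$ as in (\ref{eq:DJT}), the same intermediate point $(x(t),\opt u(t))$ splitting the Hamiltonian difference into a pure state term (bounded via concavity in $x$ and the identity $\frac{\partial \Ham}{\partial x}(\opt x,\opt u,t,\opt{J}_x(t,T),1)=-\frac{\dd \opt{J}_x}{\dd t}(t,T)$) and a pure control term, and the same telescoping argument (\ref{eq:0}) that eliminates the state terms. Your closing remark on why the state must be varied at $\opt u(t)$ and the control at $x(t)$ correctly identifies the key structural point of the paper's argument.
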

\begin{proof} Let us consider any admissible pair $(u(\cdot),x(\cdot))$, i.e. $ u(t)\in U$ and corresponding trajectory $x(t)\in X$ for all $t>t_0$. 
Due to concavity of $\Ham$ w.r.t. $x$  we have the following inequality
\begin{align}
& \Ham(x(t), u(t),t,\opt{J}_x(t,T),1) - \Ham(\opt x(t), \opt u(t),t,\opt{J}_x(t,T),1)  \nonumber\\
&= \Ham(x(t), u(t),t,\opt{J}_x(t,T),1) - \Ham( x(t), \opt u(t),t,\opt{J}_x(t,T),1)  \nonumber\\
&+ \Ham(x(t), \opt u(t),t,\opt{J}_x(t,T),1) - \Ham(\opt x(t), \opt u(t),t,\opt{J}_x(t,T),1)  \nonumber\\
& \leq \left\langle\frac{\partial \Ham}{\partial x}(\opt x(t), \opt u(t),t,\opt{J}_x(t,T),1), x(t) - \opt x(t)\right\rangle\nonumber\\
&  + \Ham(x(t), u(t),t,\opt{J}_x(t,T),1) - \Ham( x(t), \opt u(t),t,\opt{J}_x(t,T),1),\label{ineq:concx}
\end{align}
where $\frac{\partial \Ham}{\partial x}(\opt x(t), \opt u(t),t,\opt{J}_x(t,T),1) = -  \frac{\dd \opt{J}_x}{\dd t}(t,T)$ by definition of $\opt{J}_x(t,T)$ in (\ref{eq:Jx}).
Increment (\ref{eq:DJT}) in the value of the functional can be written as follows:
\begin{align*}
  \Delta J(T) & \geq \int\limits_{t_0}^{T}\!\!\left(\left\langle \frac{\dd \opt{J}_x}{\dd t}(t,T),  x(t) - \opt{x}(t) \right\rangle + \left\langle \opt{J}_x(t,T),  \frac{\dd x(t)}{\dd t} - \frac{\dd \opt{x}(t)}{\dd t} \right\rangle\right)\!\dd t  \nonumber\\
& + \int\limits_{t_0}^{T}\!\!\left(\Ham(x(t), \opt u(t),t,\opt{J}_x(t,T),1) - \Ham(x(t), u(t),t,\opt{J}_x(t,T),1)\right)\!\!\dd t,
\end{align*}
where the first integral is zero due to (\ref{eq:0}).
 \end{proof}
Similar to Corollary~\ref{c:limNS} if we require linearity w.r.t. $x$ instead of concavity, then inequality (\ref{ineq:concx}) becomes equality and sufficient conditions also become necessary.
\begin{corollary}[Nesessary and sufficient optimality conditions]\label{c:limNS3}
Let for almost all time instances $t \geq
t_0$ there exists $T_1 = T_1(t) > t_0$ such that
$\Ham(x,u,t,\opt{J}_x(t,T),1)$ is linear in $x$ for each $T \geq T_1$,
where we take $\opt{J}_x(\tau,T)$ as defined (\ref{eq:Jx}). 
Then admissible pair $(\hat{u}(\cdot),\hat{x}(\cdot))$\\
1) is OO iff for each admissible pair $(x(\cdot),u(\cdot))$ holds inequality (\ref{eq:limUoo3}),\\
2) is WOO iff for each admissible control $(x(\cdot),u(\cdot))$ holds  inequality (\ref{eq:limUwoo3}).
\end{corollary}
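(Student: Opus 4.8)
The plan is to reuse the proof of the preceding proposition almost verbatim, noting that under linearity its one inequality step becomes an exact identity, and then to combine the resulting exact formula for the increment with the elementary observation that overtaking and weakly overtaking optimality are themselves equivalent to $\liminf/\limsup$ statements about that increment. The only genuinely new content relative to the sufficiency result is the necessity direction.

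First I would recall that, after cancelling the vanishing boundary integral via~(\ref{eq:0}), the proof of the preceding proposition yields
\[
\Delta J(T) \;\geq\; \int_{t_0}^{T}\!\Big(\Ham(x(t),\opt u(t),t,\opt{J}_x(t,T),1) - \Ham(x(t),u(t),t,\opt{J}_x(t,T),1)\Big)\dd t,
\]
where the inequality is inherited solely from the concavity estimate~(\ref{ineq:concx}). Under the present hypothesis $\Ham(x,u,t,\opt{J}_x(t,T),1)$ is linear (hence affine) in $x$, so the first-order expansion used in~(\ref{ineq:concx}) is exact and the display above holds with equality. This gives a closed-form representation of the increment $\Delta J(T)$ in terms of the control-difference of the Hamilton--Pontryagin function alone.

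Next I would translate Definitions~\ref{def:LOO} and~\ref{def:LWOO} into limit form. Since $\Delta J(T) = J(\opt u(\cdot),x_0,t_0,T) - J(u(\cdot),x_0,t_0,T)$, the $\eps$-formulation of overtaking optimality says precisely that for every admissible $u(\cdot)$ and every $\eps>0$ one has $-\Delta J(T') \leq \eps$ for all sufficiently large $T'$, i.e. $\liminf_{T\to\infty}\Delta J(T)\geq 0$; likewise weak overtaking optimality is equivalent to $\limsup_{T\to\infty}\Delta J(T)\geq 0$. These are genuine equivalences, not mere implications, which is exactly what makes the necessity direction available.

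Finally I would substitute the exact identity for $\Delta J(T)$ into these two equivalences: $\liminf_{T\to\infty}\Delta J(T)\geq 0$ becomes verbatim condition~(\ref{eq:limUoo3}) and $\limsup_{T\to\infty}\Delta J(T)\geq 0$ becomes condition~(\ref{eq:limUwoo3}), closing both ``iff'' claims simultaneously. The step I expect to require the most care is the passage from inequality to equality under the integral sign: linearity is assumed only for horizons $T\geq T_1(t)$ with $T_1$ depending on $t$, so strictly one must argue that the contribution of the time set where $T<T_1(t)$ is asymptotically negligible as $T\to\infty$, the same tacit point on which the sufficiency argument already rests.
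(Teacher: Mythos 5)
Your proposal is correct and takes essentially the same route as the paper: the paper's own justification is precisely the remark that linearity in $x$ turns inequality (\ref{ineq:concx}) into an equality, so the increment $\Delta J(T)$ equals the integral in (\ref{eq:limUoo3})--(\ref{eq:limUwoo3}) exactly, and the liminf/limsup reformulations of Definitions~\ref{def:LOO} and~\ref{def:LWOO} then yield both directions of the ``iff''. Your closing caveat about the set of times $t$ with $T < T_1(t)$ is a genuine subtlety, but it is equally tacit in the paper's proof of the preceding proposition, so it does not separate your argument from the paper's.
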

Consequently, conditions (\ref{eq:limUoo3})--(\ref{eq:limUwoo3}) lead to same results as (\ref{eq:limUoo})--(\ref{eq:limUwoo}) in linear Examples \ref{ex:oscil} and \ref{ex:unbnd}.

\section{Discussion}
We derive sufficient conditions (\ref{eq:limUoo})--(\ref{eq:limUwoo}) and (\ref{eq:limUoo2})--(\ref{eq:limUwoo2}) instead of (\ref{eq:SCoo})--(\ref{eq:SCwoo}) in the Mangasarian sufficiency theorem. New condition could be easier to check since we do not have to calculate all admissible trajectories of the state variable. We do not have also to find the \lq\lq{}true\rq\rq{} adjoint variable of the maximum principle. Instead we use particular adjoint  solution (\ref{eq:Jx}) that does not have to be bounded in $T$.  We extend conditions to the case where Hamiltonian is nether differentiable nor concave in control.

\section*{Acknowledgments}
This work was supported by 
Russian Science Foundation, grant 19-11-00223.

\section*{References}
\bibliography{biblio}

\begin{thebibliography}{10}
\expandafter\ifx\csname natexlab\endcsname\relax\def\natexlab#1{#1}\fi
\expandafter\ifx\csname url\endcsname\relax
  \def\url#1{\texttt{#1}}\fi
\expandafter\ifx\csname urlprefix\endcsname\relax\def\urlprefix{URL }\fi

\bibitem[{Acemoglu(2009)}]{Acemoglu2009}
Acemoglu, D., 2009. Introduction to Modern Economic Growth. Princeton
  University Press.

\bibitem[{Aseev(2017)}]{Aseev2017}
Aseev, S.~M., 2017. Existence of an optimal control in infinite-horizon
  problems with unbounded set of control constraints. Proceedings of the
  Steklov Institute of Mathematics (Suppl.) 297~(suppl. 1), 1--10.

\bibitem[{{Belyakov}(2015)}]{BelyakovCONDITION}
{Belyakov}, A.~O., Dec. 2015. {Necessary Conditions for Infinite Horizon
  Optimal Control Problems Revisited}. ArXiv e-prints.

\bibitem[{{Belyakov}(2019)}]{Belyakov2019}
{Belyakov}, A.~O., 2019. On necessary optimality conditions for ramsey-type
  problems. Ural Mathematical Journal 5~(1), 24--30.

\bibitem[{Carlson et~al.(1991)Carlson, Haurie, and
  Leizarowitz}]{Carlson1991infinite}
Carlson, D.~A., Haurie, A.~B., Leizarowitz, A., 1991. Infinite horizon optimal
  control. Springer-Verlag, Berlin, Heidelberg.

\bibitem[{Cartigny and Michel(2003)}]{Michel2003}
Cartigny, P., Michel, P., 2003. On a suffcient transversality condition for
  infinite horizon optimal control problems. Automatica 39, 1007--1010.

\bibitem[{Khlopin(2013)}]{Khlopin2013}
Khlopin, D., 2013. Necessity of vanishing shadow price in infinite horizon
  control problems. Journal of Dynamical and Control Systems 19~(4), 519--552.

\bibitem[{Pickenhain and Lykina(2006)}]{PickenhainLykina2006}
Pickenhain, S., Lykina, V., 2006. Sufficiency conditions for infinite horizon
  optimal control problems. In: Seeger, A. (Ed.), Recent Advances in
  Optimization. Springer Berlin Heidelberg, Berlin, Heidelberg, pp. 217--232.

\bibitem[{Ramsey(1928)}]{Ramsey1928}
Ramsey, F.~P., 1928. A mathematical theory of saving. The Economic Journal
  38~(152), 543--559.

\bibitem[{Seierstad and Syds{\ae}ter(1986)}]{Seierstad1986}
Seierstad, A., Syds{\ae}ter, K., 1986. Optimal control theory with economic
  applications. Elsevier North-Holland, Inc.

\end{thebibliography}
\end{document}